\theoremstyle{plain}
\newtheorem{theorem}{Theorem}[section]
\newtheorem{lemma}[theorem]{Lemma}
\newtheorem{corollary}[theorem]{Corollary}
\crefname{fact}{Fact}{Facts}
\crefname{claim}{Claim}{Claims}
\newtheorem*{claim*}{Claim}
\theoremstyle{definition}
\newtheorem{definition}[theorem]{Definition}
\crefname{question}{Question}{Questions}
\newtheorem{counterexample}[theorem]{Counter Example}
\newcommand{\define}[1]{\textbf{#1}}
\newcommand{\R}{\mathbb{R}}
\newcommand{\Z}{\mathbb{Z}}
\newcommand{\N}{\mathbb{N}}
\newcommand{\norm}[1]{\left\Vert#1\right\Vert}
\newcommand{\abs}[1]{\left\vert#1\right\vert}
\newcommand{\inv}{^{-1}}
\DeclareMathOperator{\Out}{Out}
\DeclareMathOperator{\Aut}{Aut}
\DeclareMathOperator{\rank}{rank}
\DeclareMathOperator{\conv}{Conv}
\newcommand{\hull}[1]{{\EuScript H}\!(#1)}
\newcommand{\free}{{F}} % the base free group (of rank n)
\renewcommand{\int}{\mathcal{I}} % the intersection graph
\newcommand{\nbhd}[2]{{\EuScript N}_{#1}(#2)} % basically the same thing as a ball, but #2 need not be a point
\newcommand{\dhaus}{d_{\mathrm{Haus}}} % notation for the Hausdorff distance
\newcommand{\len}[1]{\ell(#1)} % length (i.e., cardinality) of a tuple
\newcommand{\magnitude}[1]{\norm{#1}} % magnitude of a tuple
\newcommand{\conjmag}[1]{{\EuScript C}(#1)} % conjugacy magnitute of a tuple
\newcommand{\underlie}[1]{{\EuScript U}\!(#1)} % underlying tuple of a partitioned tuple
\newcommand{\J}{\mathbf{J}}
\newcommand{\cay}[2]{\mathrm{Cay}({#2}, {#1})} % The #1--Cayley graph of a f.g group #2=<#1>
\begin{document}

\renewcommand{\thefootnote}{\fnsymbol{footnote}} 
\footnotetext{\emph{Key words and phrases:} Hyperbolic group extensions, rank, Nielsen equivalence, convex cocompact subgroups} 
\footnotetext{\emph{2010 Mathematics Subject Classification:} Primary 
20F67, % hyperbolic groups
20E22; % Extensions, wreath products, and other compositions
Secondary
20F65, % geom group theory
20F05, % generators, relations, and presentations
20F10  % word problem and other decision problems
}
\renewcommand{\thefootnote}{\arabic{footnote}} 

\title{Rank and Nielsen equivalence in hyperbolic extensions}

\author{Spencer Dowdall and Samuel J. Taylor
\thanks{
The first named author was supported by NSF grant DMS-1711089; the second named author was supported by NSF grants DMS-1400498 and DMS-1744551.}}
\date{\today}

\maketitle

\begin{abstract}
In this note, we generalize a theorem of Juan Souto on rank and Nielsen equivalence in the fundamental group of a hyperbolic fibered $3$--manifold to a large class of hyperbolic group extensions. This includes all hyperbolic extensions of surfaces groups as well as hyperbolic extensions of free groups by convex cocompact subgroups of $\Out(F_n)$. 
\end{abstract}

\section{Introduction}
Perhaps the most basic invariant of a finitely generated group is its \define{rank}, that is, the minimal cardinality of a generating set. Despite its simple definition, rank is notoriously difficult to calculate even for well-behaved groups. For example, work of Baumslag, Miller, and Short \cite{BMS} shows that the rank problem is unsolvable for hyperbolic groups. In this note we calculate the rank for a large class of hyperbolic group extensions and furthermore show that, up to Nielsen equivalence, all minimal (i.e., minimal-cardinality) generating sets are of a standard form. 

Let $1 \to H \to G \to \Gamma \to 1$ be an exact sequence of infinite hyperbolic groups. We say that the extension has 
the \define{Scott--Swarup property} if each finitely generated, infinite index subgroup of $H$ is quasiconvex as a subgroup of $G$. Every subgroup $\Delta \le \Gamma$ induces a new short exact sequence $1\to H\to G_\Delta\to \Delta\to 1$, where $G_\Delta$ is the full preimage of $\Delta$ under the surjection $G\to \Gamma$. 
Our main theorem is the following; for the statement $\ell_\Gamma(\cdot)$ denotes conjugacy length with respect to any finite generating set for $\Gamma$.

\begin{theorem}\label{th:intro1}
Let $1 \to H \to G \to \Gamma \to 1$ be an exact sequence of infinite hyperbolic groups that has 
the Scott--Swarup property and torsion-free kernel $H$. For every $r\ge0$ there is an $N \ge 0$ such that 
if $\Delta \le \Gamma$ is a finitely generated subgroup with $\rank(\Delta) \le r$
and $\ell_\Gamma(\delta) \ge N$ for each $\delta \in \Delta \setminus \{1\}$, then 
\[
\rank(G_\Delta) = \rank(H) + \rank(\Delta).
\]
Moreover, every minimal generating set for $G_\Delta$ is Nielsen equivalent to a generating set which contains a minimal generating set for $H$ and projects to a minimal generating set for $\Delta$.
\end{theorem}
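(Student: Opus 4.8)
The bound $\rank(G_\Delta)\le\rank(H)+\rank(\Delta)$ is the easy direction: a minimal generating set for $H$ together with a choice of lifts of a minimal generating set for $\Delta$ generates $G_\Delta$. So the content lies in the reverse inequality and the Nielsen rigidity, and I would organize the proof as follows. \emph{Preliminary reductions.} Fix $r$. For $N$ large (depending on $r$ and the extension) a standard ping-pong/``short basis'' argument shows that any $\Delta\le\Gamma$ with $\rank(\Delta)\le r$ and $\ell_\Gamma(\delta)\ge N$ for all $\delta\ne1$ is free of some rank $k:=\rank(\Delta)\le r$ and is quasiconvex in $\Gamma$ with constants depending only on $r$; enlarging $N$ we may also assume $\Delta$ is torsion-free (torsion elements of $\Gamma$ have bounded conjugacy length), hence infinite unless $\Delta=1$, which case is trivial. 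Since $\Delta$ is free the sequence $1\to H\to G_\Delta\to\Delta\to1$ splits, giving $G_\Delta=H\rtimes L$ with $L\cong F_k$ a section, and $L$ is undistorted in $G_\Delta$ (reduced words in a basis of $L$ project to reduced, hence long, words in the free group $\Delta$); moreover $G_\Delta$ is torsion-free, since a finite-order element would project to a finite-order element of $\Delta$ and so lie in the torsion-free group $H$.

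\emph{Reduction of a minimal generating set.} Let $S=(g_1,\dots,g_n)$ with $n=\rank(G_\Delta)\le\rank(H)+k$. Its image in $\Delta=F_k$ generates $F_k$, so Nielsen's reduction method transforms it, by Nielsen transformations, into $(\bar x_1,\dots,\bar x_k,1,\dots,1)$ with $(\bar x_1,\dots,\bar x_k)$ a basis. Performing the same transformations on $S$, we may assume $g_{k+1},\dots,g_n\in H$ and $(\bar g_1,\dots,\bar g_k)$ a basis of $\Delta$; then $\langle g_1,\dots,g_k\rangle$ is a $k$-generated group surjecting onto $F_k$, hence is itself free of rank $k$ (as $F_k$ is Hopfian) and meets $H$ trivially, so we take it to be the section $L$. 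Set $K:=\langle g_{k+1},\dots,g_n\rangle\le H$. Using $G_\Delta=\langle L,K\rangle=LH$ and $L\cap H=1$, one checks that the subgroup generated by the $L$-conjugates of $K$ is all of $H$: writing $J=\langle\,\ell K\ell^{-1}:\ell\in L\,\rangle\le H$, the set $LJ$ is a subgroup equal to $G_\Delta$, and projecting to $\Delta$ identifies its kernel with $J$; since that kernel is $H$ we get $H=J=\langle\,\ell K\ell^{-1}:\ell\in L\,\rangle$.

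\emph{The crux: $K=H$.} Granting this, $(g_{k+1},\dots,g_n)$ generates $H$, so $n-k\ge\rank(H)$, and with the easy upper bound this forces $n=\rank(H)+k$ and shows that, after the above Nielsen moves, $S$ has exactly the asserted standard form. Suppose $K\ne H$. Since $\Gamma$ is infinite, $H$ is an infinite normal subgroup of infinite index in the hyperbolic group $G$, hence is \emph{not} quasiconvex in $G$. If $K$ has infinite index in $H$, it is a finitely generated infinite-index subgroup of $H$, so by the Scott--Swarup property $K$ is quasiconvex in $G$ with constant bounded in terms of $r$. Every nontrivial $\ell\in L$ projects to a nontrivial element of $\Delta\le\Gamma$ of conjugacy length at least $N$, hence has translation length in $G$ bounded below by a fixed multiple of $N$; so once $N$ is large relative to the (uniform) quasiconvexity constant of $K$, the distinct $L$-conjugates of $K$ are pairwise widely separated in $G$. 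A ping-pong / quasiconvex-combination argument then shows that every finite join of $L$-conjugates of $K$ is a quasiconvex free product in $G$; as $H$ is finitely generated it is such a finite join, so $H$ would be quasiconvex, a contradiction. Hence $K$ has finite index in $H$, and this case is ruled out by a separate argument modeled on Souto's treatment of the finite-index situation, using that $G_\Delta$ is torsion-free, that the conjugating automorphisms $\mathrm{ad}(\ell)$ are ``large'', and the bound $\rank(K)\le n-k\le\rank(H)$.

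\emph{Main obstacle.} The heart of the matter is the crux step, and within it the implication ``largeness of $\Delta$ $\Rightarrow$ the $L$-conjugates of $K$ are \emph{uniformly} separated, so their joins remain quasiconvex.'' This is where the geometry of the bundle $G_\Delta\to\Delta$ enters essentially---through the flaring of the fibers forced by the lack of short conjugacy classes in $\Delta$, equivalently the large displacements of the induced automorphisms of $H$---and it is the direct generalization of Souto's key lemma; the delicate point is to make the separation constants independent of \emph{which} bounded-size generating set $S$ produced $K$ (so that a single $N$ works for all $\Delta$ with $\rank(\Delta)\le r$). The second genuinely delicate point is the finite-index case: there $K$ carries no quasiconvexity, and---unlike for free or surface-group kernels, where rank strictly increases under proper finite-index subgroups---for general torsion-free hyperbolic kernels one cannot conclude by a crude rank count and must again exploit the largeness of the action rather than mere algebra.
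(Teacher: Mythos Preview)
Your outline has the right overall shape, and the infinite-index versus finite-index dichotomy for $K\le H$ is morally what drives the argument. However, there is a genuine circularity in the infinite-index case that you yourself flag but do not resolve. The constant $N$ must be fixed depending only on $r$ and the extension, \emph{before} one is handed $\Delta$ and a minimal generating tuple $S$ for $G_\Delta$; yet your ping-pong step requires $N$ to be large compared to the quasiconvexity constant of $K=\langle g_{k+1},\dots,g_n\rangle$, and that constant depends on $S$ and on the particular Nielsen moves you performed. The Scott--Swarup hypothesis gives no uniformity here: the elements $g_{k+1},\dots,g_n$ obtained after Nielsen reduction may have arbitrarily large word length in $G$, and there is no bound on the quasiconvexity constant of a finitely generated infinite-index subgroup of $H$ in terms of its rank alone. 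This is exactly the difficulty the paper's proof is organized around. Rather than performing all Nielsen moves at once and then analyzing the resulting $K$, the paper invokes the Kapovich--Weidmann partitioned-tuple machinery: one iteratively applies their dichotomy, at each stage producing sub-tuples $Y_i$ whose \emph{conjugacy magnitude} is bounded by an explicit constant $D_n$ (with $D_{n+1}$ computed recursively from $D_n$ via a combination lemma), and a counting argument shows the process must terminate within $2\rank(H)$ steps. Setting $N=1+D_{2\rank(H)}$ then gives a single threshold that works for every $\Delta$ and every minimal generating set.

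There is a second gap in the finite-index case. You write that ``for general torsion-free hyperbolic kernels one cannot conclude by a crude rank count,'' but in fact the hypotheses force $H$ to be a free group or a closed surface group. This follows from the JSJ decomposition of $H$ together with the observation (itself a consequence of Scott--Swarup) that no infinite-order element of $\Gamma$ can preserve the conjugacy class of a finitely generated infinite-index subgroup of $H$; a nontrivial free-product decomposition of $H$ into surface and free factors would be canonical, hence virtually preserved by $\Gamma$, contradicting this. Once one knows $H$ is free or a surface group, every proper finite-index subgroup has strictly larger rank, and $\rank(K)\le n-k\le\rank(H)$ disposes of the finite-index case in one line. (Your preliminary claim that $\Delta$ is free is plausible but neither proved nor needed; the paper's argument makes no use of it.)
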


Examples of subgroups $\Delta\le \Gamma$ satisfying these conditions can easily be constructed. Indeed, for any set $\delta_1,\dotsc,\delta_r$ of pairwise independent infinite order elements of $\Gamma$, \Cref{th:intro1} applies to $\Delta = \langle \delta_1^m,\dotsc, \delta_r^m\rangle$ for all sufficiently large $m$. Alternately, one can build finite-index subgroups $K\le \Gamma$ such that \Cref{th:intro1} applies to every rank $r$ subgroup of $K$. 

\Cref{th:intro1} generalizes a theorem of Juan Souto \cite{souto2008rank}, who established this result when $\Gamma\cong \Z$ and $H$ is the fundamental group of a closed orientable surface $S_g$ of genus $g\ge 2$. 
Here the extension is induced by a hyperbolic $S_g$--bundle over $S^1$ with pseudo-Anosov monodromy $f\colon S_g\to S_g$, so that  $G$ is the fundamental group of the mapping torus $M_f$ of $f$.
In this language, Souto proves that the rank of $\pi_1(M_{f^N})\cong G_{\langle f^N \rangle}$ is equal to $2g+1$ for $N$ sufficiently large. Moreover, any two minimal generating sets in this situation are Nielsen equivalent. See also the work of Biringer--Souto \cite{BS_rank} for more on this special case.
In this paper, we use techniques previously established by Kapovich and Weidmann \cite{KapovichWeidmann-kleinian,KapovichWeidman--indecomp} to generalize Souto's result to \Cref{th:intro1}.

\Cref{th:intro1} applies to all hyperbolic extensions of surface groups \cite{FarbMosher, H, KentLein} as well as all hyperbolic extensions of free groups by convex cocompact subgroups of $\Out(F_n)$ \cite{DT1, HaHe, DT2}. We thus obtain the following corollary:

\begin{corollary}
\label{cor:applications}
The conclusions of \Cref{th:intro1} hold for all extensions of the following forms:
\begin{enumerate}\renewcommand{\theenumi}{\roman{enumi}}
\item\label{surface} Extensions $1\to \pi_1(S_g)\to G\to \Gamma\to 1$ with $G$ and $\Gamma$ both infinite and hyperbolic.
\item\label{free} Extensions $1\to \free_g\to G\to \Gamma\to 1$  such that $G$ is hyperbolic and the induced outer action $\Gamma\to \Out(\free_g)$ has convex cocompact image.
\end{enumerate}
\end{corollary}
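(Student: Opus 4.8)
The plan is to derive both cases directly from \Cref{th:intro1} by verifying, for each of the two families, the hypotheses of that theorem: that $1\to H\to G\to\Gamma\to 1$ is an exact sequence of infinite hyperbolic groups, that the kernel $H$ is torsion-free, and that the extension has the Scott--Swarup property. The torsion condition is automatic, since surface groups and free groups are torsion-free; and in both families we may assume the genus is at least $2$ (for $g\le 1$ the kernel is $\Z$ or $\Z^2$ and the situation is either degenerate or vacuous), so that $H$ is infinite, non-elementary, and torsion-free. The substance therefore lies in two points: that the quotient $\Gamma$ is infinite and hyperbolic, and that the Scott--Swarup property holds.

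For the first point there is nothing to do in the surface-group case, where hyperbolicity and infiniteness of $\Gamma$ are hypotheses. In the free-group case $\Gamma$ is presented only through its outer action, so I would argue as follows. A convex cocompact subgroup of $\Out(\free_g)$ quasi-isometrically embeds in the Gromov hyperbolic free factor complex, hence is infinite and word hyperbolic. The kernel $K$ of $\Gamma\to\Out(\free_g)$ is finite: for $\gamma\in K$ with lift $\tilde\gamma\in G$, conjugation by $\tilde\gamma$ acts on the normal subgroup $\free_g$ as an inner automorphism $\mathrm{conj}_w$ with $w\in\free_g$, so $w^{-1}\tilde\gamma$ lies in the centralizer $C_G(\free_g)$; as $\free_g$ is a non-elementary normal subgroup of the hyperbolic group $G$, an infinite-order element centralizing $\free_g$ would have a common power with each of two independent elements of $\free_g$, which is impossible, so $C_G(\free_g)$ is torsion and hence finite. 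This bounds $[G_K:\free_g]$ and forces $K$ finite, so $\Gamma$ is a finite extension of a hyperbolic group, hence infinite and hyperbolic. In either case $1\to H\to G\to\Gamma\to 1$ is then an exact sequence of infinite hyperbolic groups with torsion-free kernel.

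The remaining and only serious point, the Scott--Swarup property, is imported wholesale from the cited literature. For surface-group extensions, the work of Farb--Mosher \cite{FarbMosher}, Hamenst\"adt \cite{H}, and Kent--Leininger \cite{KentLein} identifies the hyperbolic extensions as exactly those for which the image of $\Gamma\to\Mod(S_g)$ is convex cocompact, and shows that for these every finitely generated, infinite-index subgroup of $\pi_1(S_g)$ is quasiconvex in $G$; this is the surface-bundle analogue of the classical Scott--Swarup theorem on fibered hyperbolic $3$--manifolds underlying Souto's result (the case $\Gamma\cong\Z$). For free-group extensions the parallel statement --- that when $G$ is hyperbolic and the image of $\Gamma\to\Out(\free_g)$ is convex cocompact, every finitely generated, infinite-index subgroup of $\free_g$ is quasiconvex in $G$ --- is supplied by \cite{DT1, HaHe, DT2}. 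With the Scott--Swarup property in hand, \Cref{th:intro1} applies directly and yields both the rank equality and the Nielsen-equivalence conclusion. I expect the genuine difficulty to lie entirely outside the corollary, in those cited quasiconvexity results; within the corollary itself the only slightly delicate bookkeeping is establishing hyperbolicity of $\Gamma$ in the free-group case.
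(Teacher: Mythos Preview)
Your approach is the same as the paper's: note that the kernels are torsion-free and import the Scott--Swarup property from the literature, then invoke \Cref{th:intro1}. Two points of comparison are worth making.

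First, your attributions for the Scott--Swarup property in the surface case are misplaced. The references \cite{FarbMosher,H,KentLein} establish the characterization of hyperbolic surface-group extensions via convex cocompactness in $\Mod(S_g)$, but they do \emph{not} prove that finitely generated infinite-index subgroups of $\pi_1(S_g)$ are quasiconvex in $G$. That result---the Scott--Swarup property proper---is due to Scott and Swarup \cite{scott1990geometric} in the cyclic case and to Dowdall--Kent--Leininger \cite{dowdall2014pseudo} (see also Mj--Rafi \cite{MjRafi}) in general; these are what the paper cites. Similarly, for the free-group case the paper credits Mitra \cite{mitra1999theorem} for the cyclic case and \cite{DT2,MjRafi} for the general one.

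Second, you go beyond the paper by explicitly arguing that $\Gamma$ is infinite and hyperbolic in case (\ref{free}), where this is not stated as a hypothesis. The paper's proof omits this entirely, presumably regarding it as implicit in the cited setup. Your argument is correct in outline (finite kernel of $\Gamma\to\Out(\free_g)$ via finiteness of $C_G(\free_g)$, and finite-by-hyperbolic implies hyperbolic since the quotient map is a quasi-isometry), though note that convex cocompactness alone does not literally force the image to be infinite---the trivial subgroup is vacuously convex cocompact---so a nontriviality assumption is tacitly in play.
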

\begin{proof}
Since the kernels of the above extensions are torsion-free, it suffices to verify the Scott--Swarup property. For the surface group extensions in (\ref{surface}), this was established by Scott and Swarup in the case that $\Gamma\cong \Z$ \cite{scott1990geometric} and by Dowdall--Kent--Leininger in the general case \cite{dowdall2014pseudo} (see also \cite{MjRafi}). For the free group extensions in (\ref{free}), Mitra \cite{mitra1999theorem} established the Scott--Swarup property when $\Gamma\cong \Z$ and the general case was proven by the authors in \cite{DT2} and by Mj--Rafi in \cite{MjRafi}.
\end{proof}

We note that Souto's theorem is exactly case (\ref{surface}) above with $\Gamma$ a cyclic group; the other cases of \Cref{cor:applications} are all new. In particular, the result is new even for free-by-cyclic groups $G = \free\rtimes_\phi \Z$ with fully irreducible and atoroidal monodromy $\phi\in\Out(\free_g)$, where the conclusion is that $\free_g\rtimes_{\phi^N}\Z$ has rank $g+1$ for all sufficiently large $N$.

The following counter examples show that neither the torsion-free hypothesis on $H$ nor the Scott--Swarup hypothesis on the extension can be dropped from \Cref{th:intro1}.

\begin{counterexample}[Lack of Scott--Swarup property]
In \cite[Section 1.1.1]{brinkmann2002splittings}, Brinkmann builds a hyperbolic automorphism $\phi$ of the free group $F= F_m * \langle a_1, \ldots a_{n}\rangle$, where $m \ge 3$, of the form
\begin{align*}
\phi(F_m) &= F_m,\\
\phi(a_i) &= 
	\begin{cases}
            a_{i+1} \quad \text{if} \;\; 1 \le i < n \\
            wa_1v \quad \text{if} \; \; i =n,
         \end{cases}
\end{align*}
where $w,v \in F_m$. 
Notice that the induced extension $G_\phi = F \rtimes_\phi \Z$ does not have the Scott--Swarup property: $F_m$ is not quasiconvex in $F_m \rtimes_\phi \Z$ (which is hyperbolic) and hence not quasiconvex in $G_\phi$. Focusing on the case where $n=2$, one sees that for each $k$ odd, $\phi^k$ has the property that $\phi^k(a_1) = w_ka_2v_k$ and $\phi^k(a_2) = w'_ka_1v'_k$ for some  $w_k,v_k,w'_k,v'_k \in F_m$. Hence, when $k$ is odd, $G_{\phi^k}$ is generated by $F_m$,  $a_1$, and a generator of $\Z$, making its rank at most $m+2 < \rank(F) +1$.
\end{counterexample}

\begin{counterexample}[Torsion in $H$]
Here we exploit the failure of \Cref{lem:big_lem}.\ref{cor:qc_or_big} in the presence of torsion.
Fix $m\ge 3$ and a prime $q$, and let $F,\: Z \le H$ be the groups 
\[H = \langle a_1,\dotsc, a_m, s \mid s^q =1,  [a_i,s] = 1\,\forall i\rangle,\quad F = \langle a_1,\dotsc, a_m\rangle \le H, \quad\text{and}\quad Z = \langle s\rangle \le H.\]
Thus $F$ is free with $\rank(F) =m$ and $H$ decomposes as a direct product $H = F\times Z$ with $\rank(H) = m+1$ and $[H:F] = q$. Let $\rho\colon H\to F$ denote the projection onto the $F$ factor  and $\iota\colon F\to H$ the inclusion of $F$ into $H$. Let $\beta\colon H\to H$ be the homomorphism defined by the assignments
\[
\begin{array}{ccccc}
\beta(s) = s &&\text{and}&& \beta(a_i ) = a_i s\quad\text{for each $i=1,\dotsc, m$}.
\end{array}
\]
Observe that $\beta^q$ is the identity, thus $\beta$ is in fact an automorphism of $H$. Since $\beta(a_1) \notin F$, we have $\beta(F) \ne F$.
Let $\tau \in \Aut(F)$ be any fully irreducible and atoroidal automorphism. Using the product structure of $H$, set $\alpha = \tau \times \mathrm{id}_Z$. We note that $\alpha$ is an automorphism of $H$ and that
\[\rho\iota = \mathrm{id}_F,\qquad\rho\beta = \rho,\qquad \rho\alpha = \tau\rho,\qquad\text{and}\qquad \rho\alpha\beta=\tau\rho. \]

Now let $\phi = \alpha\beta\in \Aut(H)$ and consider the extension $G = H \rtimes_\phi \Z$. Notice that $\phi(F) \ne F$. However, since $H$ contains only finitely many index $q$ subgroups, we may choose $n > 1$ so that $\phi^n(F) = F$. Let $G_n \le G$ be the preimage of $n\Z$ under the projection $G\to \Z$; this is an index $n$ subgroup with $G_n \cong H\rtimes_{\phi^n}\Z$. The further subgroup $G'_n \cong F\rtimes_{\phi^n\vert_F}\Z$ has $[G_n:G'_n] = q$. Since $\phi^n\vert_F = \rho\phi^n\iota = \tau^n$ is fully irreducible and atoroidal, $G'_n$ is hyperbolic \cite{BF92} and each finitely generated infinite index subgroup of $F$ is quasiconvex in $G'_n$ \cite{mitra1999theorem}. Since $[G:G'_n]$ is finite, our extension $G = H\rtimes_\phi \Z$ is also hyperbolic and has the Scott--Swarup property. However, the extension $G$ does not satisfy the conclusion of \Cref{th:intro1}: For all $k\ge 1$, the observation $\langle F, \phi^{kn+1}(F)\rangle=  H$ implies that the subextension $H\rtimes_{\phi^{kn+1}}\Z$ is generated by $F$ and the stable letter and thus rank at most $m+1 <  \rank(H) + 1$.
\end{counterexample}

\noindent \textbf{Acknowledgments}: 
This work drew inspiration from Souto's paper \cite{souto2008rank} and owe's an intellectual debt to the powerful machinery provided by Kapovich and Weidmann \cite{KapovichWeidman--indecomp,KapovichWeidmann-kleinian}. We thank the referee for helpful suggestions.

\section{Setup}
\label{sec:setup}

Fix a group $G$ with a finite, symmetric generating set $S$ and let $X = \cay{S}{G}$ be its Cayley graph. Equip $X$ with the path metric $d$ in which each edge has length $1$, making $(X,d)$ into a proper, geodesic metric space. 
For subsets $A,B\subset X$, define $d(A,B) = \inf\{d(a,b) \mid a\in A,b\in B\}$ and declare the \define{$\epsilon$--neighborhood} of $A$ to be $\nbhd{\epsilon}{A} = \{x\in X \mid d(\{x\},A) < \epsilon\}$. The \define{Hausdorff distance} between sets is defined as
\[\dhaus(A,B) = \inf\{\epsilon > 0 \mid  A\subset \nbhd{\epsilon}{B}\text{ and }B\subset \nbhd{\epsilon}{A}\}.\]

We identify $G$ with the vertices of $X$ and define the \define{wordlength} of $g\in G$ by $\abs{g}_S = d(e,g)$, where $e$ is the identity element of $G$. 
A \define{tuple} in $G$ is a (possibly empty) ordered list $L =(g_1,\dotsc,g_n)$ elements of $g$. The \define{length} of a tuple $L=(g_1,\dotsc,g_n)$ is the number $\len{L} = n$ of entries of the list, and its \define{magnitude} is defined to be $\magnitude{L} = \max_{i}\abs{g_i}_S$; for $h\in G$ we denote the tuple $(hg_1h\inv, \dotsc, h g_n h\inv)$ by $hLh\inv$. 
We define the \define{conjugacy magnitude} of a tuple $L$ to be $\conjmag{L} = \min_{h\in G}\norm{hLh\inv}$. 
The following three operations are called \define{elementary Nielsen moves} on a tuple $L= (g_1,\dotsc,g_n)$:
\begin{itemize}
\item For some $i\in \{1,\dotsc,n\}$, replace $g_i$ by $g_i\inv$ in $L$.
\item For some $i,j\in\{1,\dotsc, n\}$ with $i\ne j$, interchange $g_i$ and $g_j$ in $L$.
\item For some $i,j\in\{1,\dotsc, n\}$ with $i\ne j$, replace $g_i$ by $g_ig_j$ in $L$.
\end{itemize}
Two tuples are \define{Nielsen equivalent} if one may be transformed into the other via a finite chain of elementary Nielsen moves. Nielsen proved that any two minimal generating sets of a finitely generated free group are Nielsen equivalent \cite{nielsen1924isomorphismengruppe}. Hence, two tuples $L_1$ and $L_2$ of length $n$ are Nielsen equivalent if and only if there is an automorphism $\psi \colon F_n \to F_n$ such that $\phi_1 = \phi_2 \circ \psi$, where $\phi_i\colon F_n\to G$ is the homomorphism taking the $j$th element of a (fixed) basis for $F_n$ to the $j$th element of $L_i$. Note that Nielsen equivalent tuples generate the same subgroup of $G$. 

Following Kapovich--Weidmann \cite[Definition 6.2]{KapovichWeidmann-kleinian}, we consider the following variation:

\begin{definition}
\label{def:partitioned}
A \define{partitioned tuple} in $G$ is a list $M = (Y_1,\dotsc,Y_s;T)$ of tuples $Y_1,\dotsc,Y_s,T$ of $G$ with $s\ge 0$ such that (1) either $s > 0$ or $\len{T} > 0$, and (2) $\langle Y_i\rangle \ne \{e\}$ for each $i > 0$. Thus $(;T)$ (where $\len{T}>0$) and $(Y_1;)$ (where $\langle Y_1 \rangle \ne\{e\}$) are examples of partitioned tuples. The length of $M$ is defined to be $\len{M} = \len{Y_1}+\dotsb+ \len{Y_s}+\len{T}$. The \define{underlying tuple} of $M$ is the $\len{M}$--tuple $\underlie{M}=(Y_1,\dotsc,Y_s,T)$ obtained by concatenating $Y_1,\dotsc,Y_s,T$.
The \define{elementary moves} on a partitioned tuple $M = (Y_1,\dotsc, Y_s; (t_1,\dotsc, t_n))$ consist of:
\begin{itemize}
\item For some $i\in \{1,\dotsc, s\}$ and $g\in \langle (\cup_{j\ne i}Y_j)\cup \{t_1,\dotsc, t_n\}\rangle$, replace $Y_i$ by $g Y_i g\inv$.
\item For some $k\in \{1,\dotsc, n\}$ and elements $u,u'\in \langle (\cup_j Y_j)\cup \{t_1,\dotsc, t_{k-1},t_{k+1},\dotsc, t_n\}\rangle$, replace $t_k$ by $u t_k u'$.
\end{itemize}
Two partitioned tuples $M$ and $M'$ are \define{equivalent} if $M$ can be transformed into $M'$ via a finite chain of elementary moves. In this case, it is easy to see that the underlying tuples $\underlie{M}$ and $\underlie{M'}$ are Nielsen equivalent.

\end{definition}

We henceforth assume that $G$ is a \define{hyperbolic group}, which is equivalent to requiring that $X$ be \define{$\delta$--hyperbolic} for some fixed $\delta \ge 0$. This means that every geodesic triangle $\triangle(a,b,c)$ in $X$ is \define{$\delta$--thin} in the sense that each side is contained in the $\delta$--neighborhood of the union of the other two.
A \define{geodesic} in $X$ is a map $\gamma\colon \J\to X$ of an interval $\J\subset \R$ such that $\abs{s-t} = d(\gamma(s),\gamma(t))$ for all  $s,t\in \J$.
Two geodesic rays $\gamma_1,\gamma_2\colon \R_+\to X$ are \define{asymptotic} if $\dhaus(\gamma_1(\R_+),\gamma_2(\R_+)) < \infty$. The \define{Gromov boundary} of $X$ is defined to be the set $\partial X$ of equivalence classes of geodesic rays in $X$. Note that every isometry of $X$ induces a self-bijection of $\partial X$. The equivalence class or \define{endpoint} of a ray $\gamma\colon \R_+\to X$ is denoted $\gamma(\infty)\in \partial X$, and $\gamma$ is said to \define{join} $\gamma(0)$ to $\gamma(\infty)$.
A biinfinite geodesic $\gamma\colon \R\to X$ determines two rays and is said to \define{join} their respective endpoints $\gamma(-\infty)$ and $\gamma(\infty )$. 
The fact that $X$ is proper and $\delta$--hyperbolic ensures that any two points of $X\cup\partial X$ can be joined by a geodesic segment, ray, or line; see \cite{KapovichBenakli-boundaries,KapovichWeidman--indecomp}. 
The \define{convex hull} of a set $Y\subset X\cup \partial X$ is the union $\conv(Y)$ of all geodesics joining points of $Y$ (including degenerate geodesics of the form $\{0\}\to Y$). The set $Y$ is \define{$\epsilon$--quasiconvex} if $\conv(Y)\subset \nbhd{\epsilon}{Y}$. A subgroup $U\le G$ is $\epsilon$--quasiconvex if it is so when viewed as a subset of $X$.
We refer the reader to \cite{Gromov,GhysdelaHarpe,BH} for further background on hyperbolic groups.

A sequence $\{x_n\}$ in $X$ is said to \define{converge} to $\zeta\in \partial X$ if for some (equivalently every) geodesic $\gamma\colon \R_+\to X$ in the class $\zeta$ and sequence $\{t_m\}$ in $\R_+$ with $t_m\to \infty$, one has
\[\lim_{n,m} \big(d(x_n, x_0) + d(\gamma(t_m),x_0) - d(x_n, \gamma(t_m))\big) = \infty.\]
The \define{limit set} of a subgroup $U\le G$ is the set $\Lambda(U)$ accumulation points $\zeta\in \partial X$ of an orbit $U\cdot x_0\subset X$; the fact that any two orbits of $U$ have finite Hausdorff distance implies that this is independent of the point $x_0$. Following Kapovich--Weidmann \cite[Definition 4.2]{KapovichWeidman--indecomp} we define the \define{hull} of a subgroup $U$ to be
\[\hull{U} = \overline{\conv\Big(\conv\big(\Lambda(U)\cup \{x\in X\mid d(x,u\cdot x) \le 100\delta\text{ for some $u\in U\setminus\{e\}$}\}\big)\Big)}.\]

We leave the following fact as an exercise for the reader. Alternatively, it follows from a slight modification of \cite[Lemma 4.10 and Lemma 10.3]{KapovichWeidman--indecomp}.

\begin{lemma}
\label{lem:haus-dist-subgroups}
There is a constant $A=A(\epsilon)$ for each $\epsilon\ge 0$ such that $\dhaus(U,\hull{U})\le A$ for every torsion-free $\epsilon$--quasiconvex subgroup $U$ of $G$.
\end{lemma}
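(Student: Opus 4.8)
The plan is to prove the two containments $\hull{U}\subseteq\nbhd{A}{U}$ and $U\subseteq\nbhd{A}{\hull{U}}$ with a single constant $A=A(\epsilon)$; the first contains all of the geometric content and the second then follows formally. (Throughout, $\hull{U}$ is regarded as a subset of $X$, discarding any ideal points.) First I would reduce to the case that $U$ is nontrivial, which since $U$ is torsion-free means $U$ is infinite. Then every $u\in U\setminus\{e\}$ has infinite order, hence acts on $X$ as a loxodromic isometry with $u^{\pm\infty}\in\Lambda(U)$; in particular $\Lambda(U)$ has at least two points, so $\conv(\Lambda(U))$ --- and hence $\hull{U}$ --- is nonempty.

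For the containment $\hull{U}\subseteq\nbhd{A}{U}$ I would first show $\conv(\Lambda(U))\subseteq\nbhd{\epsilon+O(\delta)}{U}$: given distinct $p,q\in\Lambda(U)$, pick sequences $u_i\to p$ and $w_i\to q$ in $U$, so the segments $[u_i,w_i]$ lie in $\conv(U)\subseteq\nbhd{\epsilon}{U}$ and pass within uniformly bounded distance of $e$; hence (using properness of $X$) they subconverge to a biinfinite geodesic from $p$ to $q$ contained in $\overline{\nbhd{\epsilon}{U}}$, and every geodesic from $p$ to $q$ is $O(\delta)$--Hausdorff close to this one. Next I would bound the almost-fixed sets, which is exactly where torsion-freeness is used: for $u\in U\setminus\{e\}$, infinite order forces $u$ to be loxodromic, and the standard estimate for loxodromic isometries of a $\delta$--hyperbolic space places $\{x:d(x,ux)\le100\delta\}$ in an $O(\delta)$--neighborhood of any biinfinite geodesic joining $u^{-\infty}$ to $u^{+\infty}$, with the constant depending only on $100\delta$ and $\delta$ and not on the translation length of $u$; such a geodesic lies in $\conv(\{u^{-\infty},u^{+\infty}\})\subseteq\conv(\Lambda(U))\subseteq\nbhd{\epsilon+O(\delta)}{U}$. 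With these two facts, the set $Y:=\Lambda(U)\cup\{x:d(x,ux)\le100\delta\text{ for some }u\in U\setminus\{e\}\}$ and all geodesics between its points lie in $\nbhd{2\epsilon+O(\delta)}{U}$: for a geodesic between finite points $a,b$ one compares $[a,b]$ with a segment $[u_a,u_b]\subseteq\conv(U)\subseteq\nbhd{\epsilon}{U}$ between nearby points of $U$ (geodesics with $D$--close endpoints being $D+O(\delta)$--Hausdorff close), and for a geodesic with an ideal endpoint one repeats the limiting argument above. Applying the convex-hull operation a second time and taking closure inflates the constant by only a further $\epsilon+O(\delta)$, so $\hull{U}\subseteq\nbhd{A}{U}$ with $A=A(\epsilon,\delta)$.

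For the reverse containment I would use that $\hull{U}$ is $U$--invariant: $\Lambda(U)$ is $U$--invariant, and for $g\in U$ one has $g\cdot\{x:d(x,ux)\le100\delta\}=\{x:d(x,(gug^{-1})x)\le100\delta\}$, so $Y$ is $U$--invariant; since left translation by $g$ is an isometry preserving $\partial X$ it satisfies $g\cdot\conv(Z)=\conv(g\cdot Z)$ and $g\cdot\overline{Z}=\overline{g\cdot Z}$, whence $g\cdot\hull{U}=\hull{U}$. Now pick any $y\in\hull{U}$; by the first containment there is $u_0\in U$ with $d(y,u_0)<A$, so $y_0:=u_0^{-1}y\in\hull{U}$ satisfies $d(e,y_0)<A$. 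Then for arbitrary $u\in U$ the point $uy_0$ lies in $\hull{U}$ with $d(u,uy_0)=d(e,y_0)<A$, so $d(u,\hull{U})<A$. Combining the two containments gives $\dhaus(U,\hull{U})\le A=:A(\epsilon)$.

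I expect the main obstacle to be the control of the almost-fixed sets in the first containment: one must invoke the correct quantitative fact about loxodromic isometries --- that $\{x:d(x,ux)\le100\delta\}$ is a bounded neighborhood of the axis of $u$ whose size does not depend on the (possibly tiny) translation length of $u$ --- and it is the failure of this for elements of finite order that makes the torsion-free hypothesis essential; for instance a central element $u$ of finite order has $d(x,ux)$ constant, hence almost-fixed set all of $X$, which would force $\hull{U}=X$. The remaining ingredients --- subconvergence of geodesics between group elements to geodesics between limit points, and tracking the neighborhood constant through the iterated hull and the closure --- are routine hyperbolic-geometry estimates, and can alternatively be quoted from \cite{KapovichWeidman--indecomp}.
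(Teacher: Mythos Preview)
The paper does not give a proof of this lemma: it is left as an exercise, with a pointer to \cite[Lemmas 4.10 and 10.3]{KapovichWeidman--indecomp}. Your overall strategy---control $\conv(\Lambda(U))$ by a limiting argument, control the almost-fixed sets via the axes, pass through the second hull and closure, and then use $U$--invariance of $\hull{U}$ for the reverse inclusion---is exactly the intended one, and all of those steps except one are fine.

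The gap is in your ``standard estimate'' for the almost-fixed sets. The assertion that for a loxodromic isometry $u$ of a $\delta$--hyperbolic space the set $\{x:d(x,ux)\le 100\delta\}$ lies in an $O(\delta)$--neighborhood of the axis, with constant \emph{independent of the translation length of $u$}, is false in a general $\delta$--hyperbolic space. Already in $\mathbb{H}^2$, for a loxodromic of translation length $\tau$ one has $d(x,ux)\approx 2R+2\log\sinh(\tau/2)$ when $x$ is at large distance $R$ from the axis; as $\tau\to 0$ the sublevel set $\{x:d(x,ux)\le C\}$ contains points with $R\to\infty$. You correctly flag this step as the crux, and your remark about finite-order central elements is apt, but the same unboundedness phenomenon persists for loxodromics of small translation length, so the claimed $O(\delta)$ bound cannot hold as stated.

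The repair uses that $X$ is the Cayley graph of a hyperbolic \emph{group}, not merely a $\delta$--hyperbolic space: in a hyperbolic group the stable translation lengths of infinite-order elements are uniformly bounded below by some $\tau_0=\tau_0(G)>0$ (they are rational with bounded denominator, by Gromov and Delzant; more concretely, any $u$ with nonempty almost-fixed set is conjugate to one of the finitely many elements of wordlength $\le 100\delta$, each of which has a definite positive translation length). With $\tau(u)\ge\tau_0$ your estimate does go through: nearest-point projection to the axis $A$ sends $x$ and $ux$ to points at distance $\ge \tau_0-O(\delta)$, and once this exceeds a threshold in $\delta$ the thin-quadrilateral inequality gives $d(x,ux)\ge 2d(x,A)-O_{\tau_0,\delta}(1)$, hence $d(x,A)\le 50\delta+O_{\tau_0,\delta}(1)$. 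Absorb $\tau_0$ into your constants (it depends only on $G$, hence is harmless for $A=A(\epsilon)$), and the remainder of your outline is correct.
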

%\begin{proof}[Proof sketch]
%Since $U$ is torsion-free, each $g\in U$ acts as a loxodromic isometry of $X$ and thus has two fixed points $g_{\pm}\in \partial X$. Let $\gamma\colon \R\to X$ be a biinfinite geodesic joining $g_-$ to $g_+$. It follows from basic hyperbolic geometry that $\{x\in X\mid d(x,g\cdot x)\le 100\delta\}$ is contained in a uniform neighborhood of $\gamma(\R)$ (probably something like $\nbhd{200\delta}{\gamma}$). Then one should be able to argue that $\hull{U}$ is contained in a uniform neighborhood of $\conv(\Lambda(U))$ and conversely. Thus $\hull{U}$ and $\conv(\Lambda(U))$ have uniformly bounded Hausdorff distance.
%
%Next, since $U$ is $\epsilon$--quasiconvex, you can see that $\conv(\Lambda(U))$ is contained in some $c=c(\epsilon)$ neighborhood of $U$: given a biinfinite geodesic connecting distinct $\zeta_1,\zeta_2\in \Lambda(U)$, approximate it as a limit of geodesic segments connecting $x_{1,n}$ to $x_{2,n}$, where $x_{i,n}$ converges to $\zeta_i$. Quasiconvexity now shows that $\conv(\Lambda(U))$ is contained in a neighborhood of $U$, as claimed. Conversely, we have to show each $u\in U$ is contained near $\conv(\Lambda(U))$, but there should be some standard argument for this.
%\end{proof}

By noting that there are only finitely many subgroups of $G$ that may be generated by elements from the finite set $\nbhd{r}{\{e\}}$, we have the following lemma:
\begin{lemma}
\label{lem:uniorm_qc}
There is a constant $c = c(r)$ for each $r> 0$ such that every quasiconvex subgroup $U\le G$ generated by elements from the $r$--ball $\nbhd{r}{\{e\}}$ is $c$--quasiconvex.
\end{lemma}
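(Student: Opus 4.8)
The plan is to leverage the fact that metric balls in the locally finite Cayley graph $X$ are finite. Since the generating set $S$ is finite, $X$ is proper, so the $r$--ball $\nbhd{r}{\{e\}}$ contains only finitely many vertices; write $B_r = \nbhd{r}{\{e\}}\cap G$ for this finite set. There are then only finitely many subsets of $B_r$, and hence only finitely many subgroups of $G$ of the form $\langle A\rangle$ with $A\subseteq B_r$. List these as $U_1,\dotsc,U_k$; crucially, this list depends only on $r$ (with $G$ and $S$ fixed throughout the section) and not on any particular subgroup under consideration.

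Next, I would pass to the sublist $U_{i_1},\dotsc,U_{i_m}$ consisting of those $U_i$ that happen to be quasiconvex. For each such $U_{i_j}$, the definition of quasiconvexity furnishes a constant $\epsilon_j\ge 0$ with $\conv(U_{i_j})\subseteq \nbhd{\epsilon_j}{U_{i_j}}$, i.e.\ $U_{i_j}$ is $\epsilon_j$--quasiconvex. I would then set $c = c(r) \colonequals \max_{1\le j\le m}\epsilon_j$ (with the convention $c\colonequals 0$ if $m=0$). Being a maximum over a finite set, this $c$ is well-defined and depends only on $r$.

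Finally I would verify that $c$ works. Suppose $U\le G$ is quasiconvex and generated by elements from the $r$--ball. Then $U = \langle A\rangle$ for some $A\subseteq B_r$, so $U$ coincides with one of the $U_i$; since $U$ is quasiconvex it must in fact be one of the $U_{i_j}$, and hence $\epsilon_j$--quasiconvex for the corresponding index $j$. As $\epsilon_j\le c$ and $\epsilon$--neighborhoods only enlarge as $\epsilon$ increases, we get $\conv(U)\subseteq\nbhd{\epsilon_j}{U}\subseteq\nbhd{c}{U}$, so $U$ is $c$--quasiconvex, as desired.

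I do not expect a genuine obstacle here: the only point that needs care is that the constant is extracted from a finite collection of subgroups whose membership is independent of the particular $U$, and this is exactly what properness of $X$ (finiteness of $B_r$) guarantees. The argument is a pure pigeonhole/compactness observation and uses nothing about hyperbolicity of $G$ beyond what is implicit in the notion of quasiconvexity.
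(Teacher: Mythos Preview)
Your argument is correct and is exactly the approach the paper takes: the paper simply observes that there are only finitely many subgroups of $G$ generated by elements from the finite set $\nbhd{r}{\{e\}}$, and states the lemma as an immediate consequence. Your write-up is just a more detailed unpacking of that one-line observation.
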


The following technical result of Kapovich and Weidmann is a key ingredient in our argument:

\begin{theorem}[Kapovich--Weidmann {\cite[Theorem 6.7]{KapovichWeidmann-kleinian}}, c.f. {\cite[Theorem 2.4]{KapovichWeidman--indecomp}}]
\label{thm:KapWeid}
For every $m\ge 1$ there exists a constant $K=K(m)\ge 0$ with the following property. Suppose that $M = (Y_1,\dotsc, Y_s;T)$ is a partitioned tuple in $G$ with $\len{M} = m$ and let $H = \langle\underlie{M}\rangle$ be the subgroup generated by the underlying tuple of $M$. Then either
\[H = \langle Y_1 \rangle \ast \dotsb \ast \langle Y_s \rangle \ast \langle T\rangle,\]
with $\langle T\rangle$ free on the basis $T$, or else $M$ is equivalent to a partitioned tuple $M' = (Y'_1,\dotsc, Y'_s; T')$ for which one of the following occurs:
\begin{enumerate}
\item There are $i,j\in \{1,\dotsc,s\}$ with $i\ne j$ and $d(\hull{\langle Y'_i\rangle},\hull{\langle Y'_j\rangle})\le K$.
\item There is some $i\in \{1,\dotsc, s\}$ and $t\in T'$ such that $d(\hull{\langle Y'_i\rangle}, t \cdot \hull{\langle Y'_i\rangle})\le K$.
\item There exists an element $t\in T'$ with a conjugate in $G$ of wordlength at most $K$.
\end{enumerate}

\end{theorem}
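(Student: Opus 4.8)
The plan is to establish this by the Kapovich--Weidmann ``generalized folding'' machinery for hyperbolic groups. First I would encode the data geometrically: writing $T = (t_1,\dots,t_n)$ with $n = \len{T}$, realize $M$ by a finite labelled graph of groups $\mathcal{A}$ consisting of a base vertex $v_0$ with trivial vertex group carrying loops $\ell_1,\dots,\ell_n$ labelled $t_1,\dots,t_n$, together with vertices $v_1,\dots,v_s$, where $v_i$ carries the abstract vertex group $\langle Y_i\rangle$ and is joined to $v_0$ by a single trivially labelled edge. There is a canonical homomorphism $\Phi\colon\pi_1(\mathcal{A},v_0)\to G$ with image $\langle\underlie{M}\rangle = H$, and abstractly $\pi_1(\mathcal{A},v_0) = \langle Y_1\rangle\ast\dots\ast\langle Y_s\rangle\ast F(T)$. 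The two elementary moves of \Cref{def:partitioned} correspond precisely to ``conjugate a vertex group'' and ``modify an edge label'' moves on $\mathcal{A}$ that induce isomorphisms of $\pi_1(\mathcal{A},v_0)$ commuting with the maps to $G$; consequently injectivity of $\Phi$ -- which holds exactly when $H = \langle Y_1\rangle\ast\dots\ast\langle Y_s\rangle\ast\langle T\rangle$ with $\langle T\rangle$ free on $T$ -- depends only on the equivalence class of $M$.

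The positive alternative would then be obtained by a ping-pong (local-to-global) argument. The hull $\hull{U}$ of any subgroup $U$ is quasiconvex with constant depending only on $\delta$, thanks to the two nested applications of $\conv$ in its definition, and for a vertex reached from $v_0$ along a path labelled $g$ one takes $g\cdot\hull{\langle Y_i\rangle}$ as its \emph{carrier}. The key principle is: if some representative $M^{*} = (Y_1^{*},\dots,Y_s^{*};T^{*})\sim M$ has the property that the sets $\hull{\langle Y_i^{*}\rangle}$ and their translates $t_k^{*}\cdot\hull{\langle Y_i^{*}\rangle}$ are pairwise more than $K$ apart while no $t_k^{*}$ has a conjugate of wordlength $\le K$, then the $\Phi$--image of a reduced graph-of-groups word $w = c_0 e_1 c_1\cdots e_r c_r$ in $\pi_1(\mathcal{A}_{M^{*}},v_0)$ is a uniform quasigeodesic in $X$ -- the ``turns'' between successive carriers exhibit only bounded backtracking, controlled by $\delta$ and the quasiconvexity constant. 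In particular the image is nonconstant whenever $w\ne 1$, so $\Phi$ is injective and, by the first paragraph, $H$ is the asserted free product.

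It then remains to produce the dichotomy: if $M$ admits \emph{no} equivalent representative with the separation property above, exhibit one satisfying (1), (2), or (3). Here I would minimize a well-ordered complexity over the equivalence class of $M$ -- one recording the mutual proximities of the carriers, with conjugacy magnitudes $\conjmag{\cdot}$ of the blocks used to break ties -- and then ``fold'': failure of the separation property for a complexity-minimal representative $M' = (Y_1',\dots,Y_s';T')$ forces a coincidence among its carriers, or a short conjugate of some $t_k'$, that no further conjugation or relabelling can remove; and since in $\mathcal{A}$ every vertex group hangs off $v_0$ and every free generator is a loop at $v_0$, a case analysis on the type of this irremovable coincidence yields exactly: two hulls $\hull{\langle Y_i'\rangle}$, $\hull{\langle Y_j'\rangle}$ within $K$ (case 1), a hull within $K$ of its own $t'$--translate (case 2), or a loop label with a short conjugate (case 3). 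Finally one checks the uniformity statement -- that the quasiconvexity constants, hence the threshold $K$ governing both the quasigeodesic estimate and the folding, depend only on $m = \len{M}$.

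The technical heart, and the step I expect to be the main obstacle, is the local-to-global estimate underlying the second paragraph: showing that the $\Phi$--image of a reduced graph-of-groups word is a uniform quasigeodesic requires careful hyperbolic geometry (bounded cancellation between successive quasiconvex carriers, thin-triangle bookkeeping at each turn), and it is precisely this estimate that pins down the function $K(m)$. A secondary difficulty is verifying that the folding step is exhaustive -- that no configuration obstructing the separation property, apart from the three listed, can persist under every equivalence of $M$.
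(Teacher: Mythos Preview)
The paper does not prove this theorem. It is quoted as a black-box input from Kapovich and Weidmann (specifically \cite[Theorem~6.7]{KapovichWeidmann-kleinian}, with a related formulation in \cite[Theorem~2.4]{KapovichWeidman--indecomp}), and the paper's argument simply \emph{applies} it in the proof of \Cref{claim:induct}. So there is no ``paper's own proof'' to compare your proposal against.

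That said, your sketch is a reasonable high-level outline of the Kapovich--Weidmann strategy: encode the partitioned tuple as a graph of groups with a map to $G$, attach hulls as carriers, minimize a complexity over the equivalence class, and argue that either a ping-pong/local-to-global estimate forces injectivity (hence the free product), or the failure of separation at a minimizer yields one of the three listed coincidences. You have also correctly identified where the real work lies---the uniform quasigeodesic estimate for reduced words and the exhaustiveness of the case analysis---and these are precisely the parts that require the detailed arguments in the original papers rather than a sketch. If you intend to include a proof rather than a citation, you should consult those sources directly; nothing in the present paper supplies the missing estimates.
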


We conclude this section with the following lemma, which ties into the conclusions of \Cref{thm:KapWeid} and is an adaptation of \cite[Propositions 7.3--7.4]{KapovichWeidmann-kleinian} to our context. Since the hypotheses of \cite{KapovichWeidmann-kleinian} are not satisfied here, we include a short proof.

\begin{lemma}
\label{lem:combine}
For every $K,r>0$ there is a constant $B=B(K,r)$ with the following property: Let $Y_1,Y_2,Y_3$ be tuples in $G$ generating torsion-free quasiconvex subgroups $U_i = \langle Y_i\rangle$ and satisfying $\conjmag{Y_i}\le r$ for each $i=1,2,3$.
\begin{itemize}
\item If $d(\hull{U_1},\hull{U_2})\le K$, then $(Y_1,Y_2)$ is Nielsen equivalent to a tuple $Y$ satisfying $\conjmag{Y}\le B$.
\item If $d(\hull{U_3},g\cdot\hull{U_3})\le K$ for $g\in G$, then $(Y_3,(g))$ is Nielsen equivalent to a tuple $Z$ with  $\conjmag{Z}\le B$.
\end{itemize}
\end{lemma}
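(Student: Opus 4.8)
The plan is to exploit the fact that each $U_i = \langle Y_i\rangle$ is torsion-free and quasiconvex, so by \Cref{lem:haus-dist-subgroups} its hull is within bounded Hausdorff distance $A$ of $U_i$ itself, where $A = A(\epsilon)$ and the quasiconvexity constant $\epsilon$ is itself controlled: since $\conjmag{Y_i}\le r$, each $U_i$ is conjugate to a subgroup generated by elements of the $r$-ball, and \Cref{lem:uniorm_qc} gives a uniform quasiconvexity constant $c = c(r)$ for such subgroups. Thus $A = A(c(r))$ depends only on $r$. Replacing each $Y_i$ by a conjugate (which is a Nielsen move once we conjugate, and does not change Nielsen class within the tuple issue — more precisely we work with $hY_ih^{-1}$), we may assume $\conjmag{Y_i}=\norm{Y_i}\le r$, i.e. each $U_i$ actually contains points near $e$; in particular $e$ is within $A+ (\text{const})$ of $\hull{U_i}$ after such a conjugation.

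For the first bullet: after conjugating $Y_1$ so that $\norm{Y_1}\le r$, the hypothesis $d(\hull{U_1},\hull{U_2})\le K$ together with the Hausdorff bounds gives a point $p\in U_1$ and a point $q\in U_2$ with $d(p,q)\le K + 2A$. Conjugating $Y_2$ by $q^{-1}$ (an element of $U_2 = \langle Y_2\rangle$, hence this is an elementary-move-type operation that is realized by Nielsen moves on the tuple $(Y_1,Y_2)$ — specifically, conjugating $Y_2$ by a word in $Y_2$ is a composition of Nielsen moves of the third type) we may further assume $\norm{Y_2}\le d(e,q^{-1}\cdot e)$-controlled, but more directly: after conjugating $Y_2$ so that $\norm{Y_2}\le r$ as well, both $U_1$ and $U_2$ contain $e$, and there are $u_1\in U_1$, $u_2\in U_2$ with $d(u_1,u_2)\le K+2A$, hence $u_1^{-1}u_2 =: g$ has $\abs{g}_S\le K+2A$. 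Now conjugating the whole tuple $(Y_1,Y_2)$ by $u_1^{-1}$ replaces $Y_2$ by $u_1^{-1}Y_2u_1$; writing $u_1^{-1}Y_2u_1 = (u_1^{-1}Y_2u_1)$ and noting $u_1^{-1}Y_2u_1 = g\,(u_2^{-1}Y_2u_2)\,g^{-1}$ where $u_2^{-1}Y_2u_2$ has small magnitude (conjugating by $u_2\in U_2$), and $u_2^{-1}Y_2u_2$ together with conjugation by the bounded element $g$ has bounded magnitude. This is a Nielsen equivalence on $(Y_1,Y_2)$ realized by elementary moves (conjugating $Y_2$-entries by words in $Y_2$, then by the fixed bounded word $g$ using entries of $Y_1$ if needed), and produces $Y = (Y_1', Y_2')$ with $\norm{Y}$ bounded by a function of $K$ and $r$ only; take $B$ to be this bound.

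For the second bullet the argument is the same with $g\cdot\hull{U_3}$ in place of $\hull{U_2}$: conjugating $Y_3$ so $\norm{Y_3}\le r$, the hypothesis yields $u,u'\in U_3$ with $d(u, g u')$ small, i.e. $u^{-1}gu'$ has bounded wordlength; then $u^{-1}gu'$ is the new last entry replacing $(g)$, obtained from $(g)$ by left/right multiplication by elements of $\langle Y_3\rangle = U_3$, which is exactly the allowed elementary move on the tuple $(Y_3,(g))$, and conjugating $Y_3$ by $u$ is realized by Nielsen moves of the third type using the last entry. So $(Y_3,(g))$ is Nielsen equivalent to $Z = (u^{-1}Y_3 u, (u^{-1}gu'))$, and $\conjmag{Z}\le\norm{Z}\le B(K,r)$.

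The main obstacle is bookkeeping the passage between ``conjugating a subtuple $Y_i$ by an element of the subgroup it or its complement generates'' and ``performing elementary Nielsen moves on the underlying ordered tuple'': one must check that each such subgroup-conjugation is genuinely a finite composition of the three elementary Nielsen moves applied to $(Y_1,Y_2)$ (resp.\ $(Y_3,(g))$), and that the bounded element $g$ arising from the Hausdorff estimates can be absorbed without inflating the magnitude beyond a bound depending only on $K$ and $r$. The finiteness of subgroups generated by bounded elements (used already for \Cref{lem:uniorm_qc}) and the Hausdorff bound of \Cref{lem:haus-dist-subgroups} are precisely what make all constants uniform; no hyperbolicity is needed beyond what those two lemmas package.
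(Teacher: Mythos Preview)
Your approach matches the paper's: invoke \Cref{lem:uniorm_qc} and \Cref{lem:haus-dist-subgroups} to get a uniform constant $A=A(c(r))$, locate nearby points $u_1\in U_1$, $u_2\in U_2$ (resp.\ $u,u'\in U_3$), and use Nielsen moves that conjugate $Y_2$ first by $u_2\in\langle Y_2\rangle$ and then by $u_1^{-1}\in\langle Y_1\rangle$ (resp.\ replace $g$ by $u^{-1}gu'$). The second bullet is essentially correct as you wrote it.

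There is, however, a genuine gap in your first-bullet argument. You write ``after conjugating $Y_2$ so that $\norm{Y_2}\le r$ as well'' and then assert that ``$u_2^{-1}Y_2u_2$ has small magnitude.'' Neither step is justified. Conjugating $Y_1$ by $h_1$ and $Y_2$ by a \emph{different} element $h_2$ is not a Nielsen move on $(Y_1,Y_2)$, nor does it preserve the conjugacy class of the concatenated tuple; and even granting $\norm{Y_2}\le r$, the element $u_2$ you produce satisfies only $\abs{u_1^{-1}u_2}_S\le K+2A$, with no control on $\abs{u_2}_S$ itself, so $\norm{u_2^{-1}Y_2u_2}$ is unbounded. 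The actual obstacle is therefore not the Nielsen bookkeeping you flag at the end (those moves are fine) but the magnitude estimate.

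The paper resolves this by \emph{not} normalizing $Y_1,Y_2$ separately. Instead it keeps the conjugators $h_1,h_2$ explicit, forms the Nielsen-equivalent tuple $Y=(Y_1,\,u_1^{-1}u_2\,Y_2\,u_2^{-1}u_1)$ directly, and then computes
\[
h_1\bigl(u_1^{-1}u_2\,Y_2\,u_2^{-1}u_1\bigr)h_1^{-1}
= f\,(h_2 Y_2 h_2^{-1})\,f^{-1},
\qquad f \colonequals h_1 u_1^{-1} u_2 h_2^{-1}.
\]
One checks $\abs{f}_S = d(u_1 h_1^{-1}, u_2 h_2^{-1}) \le 2A+K$ (this is where the Hausdorff bound on $\hull{U'_i}=h_i\hull{U_i}$ is used), and since $\norm{h_2 Y_2 h_2^{-1}}\le r$ this yields $\conjmag{Y}\le \norm{h_1 Y h_1^{-1}}\le 4A+2K+r$. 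The missing idea in your write-up is precisely this insertion of $h_2^{-1}h_2$ to expose the bounded element $f$.
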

\begin{proof} 
For brevity, we prove the claims simultaneously. By assumption, we may choose points $x_1\in \hull{U_1}$, $x_2\in \hull{U_2}$ and $z_3,z_4\in \hull{U_3}$ with $d(x_1,x_2)\le K$ and $d(z_3,gz_4)\le K$.
For $i=1,2,3$, we also choose $h_i\in G$ such that $\magnitude{h_iY_ih_i\inv}\le r$. The subgroups $U'_i = h_i U_i h_i\inv$ are then $c(r)$--quasiconvex by \Cref{lem:uniorm_qc} and hence satisfy $\dhaus(U_i',\hull{U_i'})\le A(c(r))$ by \Cref{lem:haus-dist-subgroups}.
Noting that $\hull{U'_i} = h_i \hull{U_i}$, we may choose $u_i\in U_i$ for $i=1,2$ such that $d(h_i u_i h_i\inv , h_ix_i)\le A(c(r))$. Similarly choose $w_j\in U_3$ so that $d(h_3w_jh_3\inv, h_3 z_j)\le A(c(r))$ for $j=3,4$.
Set $B= 4A(c(r))+2K+r$. 

To conclude the second claim, observe that
\begin{align*}
\abs{h_3 (w_3\inv g w_4) h_3\inv}_S &= d(w_3 h_3\inv , gw_4h_3\inv) \\
&\le d(w_3 h_3\inv, z_3) + d(z_3, gz_4) + d(gz_4, g w_4 h_3\inv) \\
&\le 2A(c(r)) + K.
\end{align*}
Since $\magnitude{h_3Y_3h_3\inv}\le r$ as well, the concatenated tuple $Z = (Y_3,(w_3\inv g w_4))$ clearly satisfies $\conjmag{Y'} \le B$. Further, since $w_3,w_4\in \langle Y_3\rangle$, it is immediate that $Z$ is Nielsen equivalent to $(Y_3,(g))$.

For the first claim, set $f = h_1u_1\inv u_2 h_2\inv$ and use the triangle inequality to observe
\begin{align*}
\abs{f}_S 
&= d(u_1h_1\inv,u_2h_2\inv)\\
&\le d(u_1h_1\inv, x_1) + d(x_1,x_2) + d(x_2,u_2h_2\inv)\\
&\le 2A(c(r)) + K.
\end{align*}
Since $\magnitude{h_2Y_2h_2\inv}\le r$, another use of the triangle inequality gives
\[\magnitude{h_1(u_1\inv u_2 Y_2 u_2\inv u_1) h_1\inv} = \magnitude{f (h_2 Y_2 h_2\inv) f\inv} \le 4A(c(r))+2K+r = B.\]
The concatenated tuple $Y = (Y_1, u_1\inv u_2 Y_2 u_2 u_1\inv)$ thus evidently satisfies $\conjmag{Y}\le B$. To complete the proof, it only remains to show that $(Y_1,Y_2)$ is Nielsen equivalent to $Y$. But this is clear: since $u_2\in \langle Y_2\rangle$  the tuple $(Y_1, Y_2)$ is equivalent to $(Y_1, u_2 Y_2 u_2\inv)$ which, since $u_1\inv\in \langle Y_1\rangle$, is in turn equivalent to $Y$. 
\end{proof}

\section{Proof of the main result}
Suppose now that our fixed group $G$ fits into a short exact sequence 
\begin{align} \label{eq:sec}
1 \longrightarrow H \longrightarrow G \overset{p}{\longrightarrow} \Gamma \longrightarrow 1
\end{align}
of infinite hyperbolic groups
that enjoys the Scott--Swarup property with torsion-free kernel $H$.  Recall that the conjugation action of $G$ on $H$ induces a homomorphism $\Phi \colon \Gamma \to \Out(H)$ and that, since $G$ is hyperbolic, $\Phi$ has finite kernel. For any subgroup $\Delta \le \Gamma$, we set $G_\Delta = p^{-1}(\Delta) \le G$, and note that this subgroup of $G$ fits into the sequence $1 \to H \to G_\Delta \to \Delta \to 1$. 

The follow lemma summarizes some of the basic properties we will require.

\begin{lemma} \label{lem:big_lem}
For the sequence \eqref{eq:sec}, we have the following:
\begin{enumerate}\renewcommand{\theenumi}{\roman{enumi}}
\item For every infinite order $g\in \Gamma$, $\Phi(g) \in \Out(H)$ does not fix the conjugacy class of any infinite index, finitely generated subgroup of $H$. \label{lem:no_fix}
\item The kernel $H$ is either free of rank at least $3$ or else isomorphic to the fundamental group of a closed surface of genus at least $2$. \label{lem:freesurface}
\item Every proper subgroup $U\lneq H$ is either quasiconvex in $G$ or else has $\rank(U)>\rank(H)$. \label{cor:qc_or_big}
\end{enumerate}
\end{lemma}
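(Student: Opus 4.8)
\emph{Part (i).} For this I would argue by contradiction, using the dynamics of loxodromic isometries on $\partial X$. Suppose $g\in\Gamma$ has infinite order and $\Phi(g)$ fixes the conjugacy class of a finitely generated, infinite-index (hence, as $H$ is torsion-free, infinite) subgroup $U\le H$. Choose a lift $\tilde g\in G$ of $g$; after replacing $\tilde g$ by $h\tilde g$ for a suitable $h\in H$ we may assume $\tilde g U\tilde g\inv=U$, and $\tilde g$ still projects to $g$, so it has infinite order in $G$ and acts on $\partial X$ as a loxodromic with fixed points $\tilde g^{\pm\infty}$. Since $\tilde g$ normalizes $U$ it preserves the closed set $\Lambda(U)$, which has at least two points; hence $\tilde g^{\pm\infty}\in\Lambda(U)$, and any bi-infinite geodesic joining them lies in $\conv(\Lambda(U))\subseteq\hull U$. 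By the Scott--Swarup property $U$ is quasiconvex in $G$, so $\dhaus(U,\hull U)\le A$ for a constant $A$ by \Cref{lem:haus-dist-subgroups}; since the cyclic orbit $\{\tilde g^n\}$ also stays within bounded distance of such a geodesic, there is a constant $C$ with $\tilde g^n\in\nbhd{C}{U}$ for all $n\in\Z$. Writing $\tilde g^n=u_nv_n$ with $u_n\in U$ and $\abs{v_n}_S<C$, finiteness of $\nbhd{C}{\{e\}}$ forces $v_{n_1}=v_{n_2}$ for some $n_1\ne n_2$, whence $\tilde g^{\,n_1-n_2}=u_{n_1}u_{n_2}\inv\in U\le H$ --- contradicting $p(\tilde g^{\,n_1-n_2})=g^{\,n_1-n_2}\ne1$.

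\emph{Part (ii).} Here the plan is to combine Part (i) with the canonical decompositions of $H$. Since $G$ is hyperbolic, $\Phi$ has finite kernel, so $\Out(H)$ is infinite, and the infinite hyperbolic group $\Gamma$ contains an infinite-order element $g$. As $H$ is torsion-free hyperbolic, infinite, with infinite $\Out$, it is not trivial and not $\Z$, hence one-ended or with infinitely many ends. If $H=A_1\ast\dotsb\ast A_k\ast F_r$ is a nontrivial Grushko decomposition with the $A_i$ one-ended, then automorphisms of $H$ permute the conjugacy classes $[A_i]$, so if $k\ge1$ a suitable power $\Phi(g^m)$ ($m\ge1$) fixes the conjugacy class of the finitely generated, infinite-index subgroup $A_1$, contradicting Part (i); thus $k=0$ and $H$ is free. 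If $H$ is one-ended, I would instead pass to its canonical JSJ decomposition over cyclic subgroups (Rips--Sela, Bowditch), which is $\Out(H)$--invariant with finitely generated (quasiconvex) vertex groups; if this decomposition has an edge, then a suitable power $\Phi(g^m)$ fixes the conjugacy class of a proper vertex group $V\lneq H$, which is finitely generated of infinite index, again contradicting Part (i). Hence the JSJ is a single vertex, which cannot be rigid ($\Out(H)$ being infinite), so $H$ is a closed surface group; being hyperbolic it has genus $\ge2$. Finally, if $H=F_r$, then $r\ge1$ ($H$ infinite), $r\ne1$ ($\Out(\Z)$ is finite), and $r\ne2$: otherwise $\Phi(g)$ is an infinite-order element of $\Out(F_2)$, and applying Part (i) to all powers $g^m$ shows $\Phi(g)$ is atoroidal and fully irreducible, whereas every fully irreducible automorphism of $F_2$ is induced by a pseudo-Anosov homeomorphism of the once-punctured torus and is therefore toroidal. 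So $r\ge3$.

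\emph{Part (iii).} For this I would do a case split on a proper subgroup $U\lneq H$. If $U$ is infinitely generated, then $\rank(U)=\infty>\rank(H)$. If $U$ is finitely generated of infinite index in $H$, the Scott--Swarup property says exactly that $U$ is quasiconvex in $G$. Otherwise $U$ is finitely generated of finite index $k\ge2$: a finite coset decomposition of $H$ over $U$ has bounded $S$--length, so $\dhaus(U,H)<\infty$ in $X$, while $H$ --- being infinite, normal, and of infinite index in the hyperbolic group $G$ --- has $\Lambda(H)=\partial X$ yet infinite index, hence is not quasiconvex in $G$; so neither is $U$, quasiconvexity being stable under bounded Hausdorff perturbation. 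Moreover, by Part (ii), $H$ is free of rank $\ge3$ or a closed surface group, so the index-$k$ subgroup $U$ has $\rank(U)=1+k(\rank(H)-1)$ (free case) or $\rank(U)=k\rank(H)-2(k-1)$ (surface case), both strictly larger than $\rank(H)$ since $\rank(H)\ge3$ and $k\ge2$. This exhausts all cases.

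The main obstacle is Part (ii) --- cutting an \emph{a priori} arbitrary torsion-free hyperbolic kernel down to a free or closed surface group. This leans on external structural facts: the existence and $\Out(H)$--equivariance of the canonical Grushko and JSJ decompositions of $H$, with finite generation and infinite index of the pertinent vertex groups (Grushko; Rips--Sela; Bowditch), together with the classical non-existence of atoroidal fully irreducible automorphisms of $F_2$. Parts (i) and (iii) are comparatively routine once \Cref{lem:haus-dist-subgroups} and these facts are in hand.
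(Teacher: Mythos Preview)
Your proof is correct and follows the same overall architecture as the paper --- quasiconvexity for (i), canonical decompositions for (ii), index dichotomy and Euler-characteristic rank formulas for (iii) --- but the execution differs in a couple of places worth noting.

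For Part~(i) the paper argues that $A\rtimes_\phi\Z$ sits inside $G$ with $A$ distorted in the semidirect product, contradicting the undistortion forced by Scott--Swarup quasiconvexity. Your boundary-dynamics argument (loxodromic fixed points lie in $\Lambda(U)$, hence the cyclic orbit stays near $U$, hence a power of $\tilde g$ lands in $U$ by pigeonhole) is really a direct proof of the fact underlying the paper's citation, namely that an infinite quasiconvex subgroup of a hyperbolic group has finite index in its normalizer. So the two arguments are equivalent; yours is more self-contained, theirs is shorter.

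For Part~(ii) the paper invokes a single JSJ-type result to write $H\cong(\ast_i\Sigma_i)\ast F_n$ and then uses canonicity. You instead separate the argument into Grushko (to reduce to free or one-ended) and then Bowditch/Rips--Sela JSJ (single vertex, non-rigid, hence closed surface). Both routes are standard. One genuine addition in your version is the explicit exclusion of $H\cong F_2$ via the Nielsen fact that every automorphism of $F_2$ preserves the conjugacy class of $[a,b]^{\pm1}$; the paper's proof of (ii) does not spell this out.

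For Part~(iii) your argument is essentially identical to the paper's, with the harmless extras of treating the infinitely-generated case separately and observing (correctly but unnecessarily, since the statement is an inclusive ``or'') that finite-index subgroups of $H$ are themselves non-quasiconvex in $G$.
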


\begin{proof}
To prove item (\ref{lem:no_fix}), suppose towards a contradiction that $g\in\Gamma$ of infinite order fixes the conjugacy class of an infinite index, finitely generated subgroup $A$ of $H$. Then, after applying an inner automorphism of $H$, we see that the semidirect product $A \rtimes_\phi \Z$ is contained in $G$, where $\phi$ is an automorphism in the class $\Phi(g)$. However, it is well-known that the subgroup $A$ is distorted (i.e. not quasi-isometrically embedded) in $A \rtimes_\phi \Z$ and hence distorted in $G$. This, however, contradicts the Scott--Swarup property and proves item (\ref{lem:no_fix}).

Next, the theory of JSJ decompositions for hyperbolic groups \cite{RipsSela} (see also \cite{Levitt}) shows that a sequence of hyperbolic groups as in \eqref{eq:sec} with torsion-free kernel $H$ must have $H$ isomorphic to the free product $(\ast_{i=1}^k\Sigma_i)\ast\free_n$, where $\free_n$ is free of rank $n$ and each $\Sigma_i$ is the fundamental group of a closed surface. We must show that this factorization is trivial, i.e. either $k=0$ or $n=0$. This follows from the fact that such a  nontrivial free product decomposition is canonical (e.g. \cite[Theorem 3.5]{scott1979topological}) and so is preserved under \emph{any} automorphism of $H$ (up to permuting the factors). Hence, for each infinite order $g \in \Gamma$, some power of $\Phi(g)$ fixes the conjugacy class of a surface group factor of $H$, contradicting item (\ref{lem:no_fix}) above unless $k=0$ or $n=0$. This proves (\ref{lem:freesurface}).

For (\ref{cor:qc_or_big}), let $J = [U:H] > 1$. If $J = \infty$, then $U$ is quasiconvex in $G$ by the Scott--Swarup property. Otherwise basic covering space theory implies $\rank(U) = m(1-J) + J\rank(H)$ for $m \in\{1,2\}$ depending, respectively, on whether $H$ is free or the fundamental group of a closed surface. 
\end{proof}

The following lemma is essential proven in \cite[Corollary 11]{kapovich2000hyperbolic} in the case where $H$ is free and $\Gamma$ is cyclic. We sketch the argument for the reader.

\begin{lemma}
\label{lem:no_split}
If $1\to H\to G\to \Gamma\to 1$ is a sequence of infinite hyperbolic groups such that $H$ is torsion-free and $G$ has the Scott--Swarup property, then $G$ does not split over a cyclic (or trivial) group.
Moreover, the same holds for $G_\Delta \le G$ whenever the subgroup $\Delta \le \Gamma$ is infinite.
\end{lemma}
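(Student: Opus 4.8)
The plan is to suppose, for contradiction, that $G$ (respectively $G_\Delta$) splits nontrivially over a cyclic --- possibly trivial --- subgroup, and to derive a contradiction; both assertions follow from the same argument, since the only facts used about $\Gamma$ are that it is infinite (so that it, like any infinite $\Delta\le\Gamma$, contains an element of infinite order) together with parts~(\ref{lem:no_fix}) and~(\ref{lem:freesurface}) of \Cref{lem:big_lem}, which hold verbatim with $\Gamma$ replaced by any infinite $\Delta\le\Gamma$. By collapsing all but one edge-orbit I reduce to a one-edge splitting $G=A\ast_C B$ or $G=A\ast_C$ over a cyclic or trivial $C$, with Bass--Serre tree $T$; the degenerate case in which an edge monomorphism $C\to A$ is onto is excluded, as it would make $G$ a Baumslag--Solitar group (in particular amenable), which cannot contain the non-elementary subgroup $H$.

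Now restrict the $G$--action on $T$ to $H\trianglelefteq G$. If $H$ fixes a vertex $v$, then $H$ lies in the normal core $N$ of the vertex group $G_v$; since in this (automatically reduced) one-edge splitting the $G_v$--orbit of $v$ spans $T$, the subgroup $N$ fixes $T$ pointwise, hence lies in a conjugate of the cyclic edge group $C$ and is therefore cyclic --- contradicting that $H\le N$ is free of rank $\ge 3$ or a closed surface group of genus $\ge 2$ by \Cref{lem:big_lem}(\ref{lem:freesurface}). So $H$ has no global fixed point. Next I rule out that $H$ fixes an end $\xi$ of $T$: if $H$ fixed two ends it would preserve a line, so a finite-index subgroup of $H$ would surject onto $\Z$ with kernel contained in an edge stabilizer, hence finitely generated --- impossible, since a nontrivial kernel of a homomorphism from $H$ onto $\Z$ is infinitely generated. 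If $H$ fixes exactly one end, then by normality so does $G$, and examining the Busemann homomorphism $\beta\colon G\to\Z$ one finds it is nonzero (else $G$ fixes a point), $\beta|_H$ is nonzero (else $H$, being finitely generated and fixing $\xi$ with trivial Busemann cocycle, would be elliptic), and $\ker\beta$ --- the directed union of the cyclic edge stabilizers along a ray toward $\xi$ --- is locally cyclic, hence abelian; this forces the infinitely generated group $\ker(\beta|_H)\le\ker\beta$ to be abelian, which is absurd. Thus $H$ fixes no end, so it has a unique minimal invariant subtree $T_H$, on which it acts cocompactly with finitely generated vertex groups and cyclic-or-trivial edge groups.

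The heart of the argument exploits normality: for each $g\in G$ the set $gT_H$ is again a minimal $H$--invariant subtree, so $gT_H=T_H$, and $g$ intertwines the $H$--action on $T_H$ with its twist by the automorphism $h\mapsto ghg^{-1}$; hence $\Phi(p(g))\in\Out(H)$ fixes the $H$--tree $T_H$ up to $H$--equivariant isometry. Choose $g$ with $p(g)$ of infinite order (possible since $\Gamma$ is infinite); as $\ker\Phi$ is finite, $\Phi(p(g))$ has infinite order. Suppose first that some edge group of the $H$--action is nontrivial --- then necessarily infinite cyclic --- or that $H$ is free and some vertex group is a nontrivial free factor, hence of rank strictly less than $\rank(H)$: in either case $\Phi(p(g))$ permutes the finitely many $H$--conjugacy classes of vertex and edge groups, so a power $\Phi(p(g)^m)$ fixes the conjugacy class of a nontrivial, finitely generated, infinite-index subgroup of $H$, contradicting \Cref{lem:big_lem}(\ref{lem:no_fix}) since $p(g)^m$ has infinite order. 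The only remaining case is that $H$ acts freely on $T_H$; then $H$ is free (a surface group cannot act freely on a tree), so $T_H$ represents a point of Culler--Vogtmann outer space fixed by $\Phi(p(g))$ --- impossible, since $\Out(F_n)$ acts on outer space with finite point stabilizers while $\Phi(p(g))$ has infinite order.

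The main obstacle, and the step I would write out in full, is excluding the possibility that $H$ --- or a normal subgroup of $G$ trapped between $H$ and a cyclic group --- fixes an end of $T$; everything else is standard Bass--Serre theory together with the two cited parts of \Cref{lem:big_lem} and the proper discontinuity of the $\Out(F_n)$--action on outer space.
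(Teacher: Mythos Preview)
Your proof is correct and follows the same overall strategy as the paper's: restrict the Bass--Serre tree action to the normal subgroup $H$, use normality to see that $\Delta$ acts on the induced splitting of $H$, and contradict \Cref{lem:big_lem}(\ref{lem:no_fix}). The paper's version is more compressed --- it asserts minimality of $H\curvearrowright T$ directly from normality (bypassing your fixed-vertex and fixed-end analysis), cites \cite{bestvina1991bounding} for finiteness of $T/H$, and rules out infinite cyclic edge stabilizers by noting they would force a $\Z^2$ inside the hyperbolic group $G$ rather than by invoking \Cref{lem:big_lem}(\ref{lem:no_fix}). Your treatment is more careful in one place the paper glosses over: when $H$ acts freely on its minimal subtree (so every vertex group is trivial), the paper's appeal to \Cref{lem:big_lem}(\ref{lem:no_fix}) for a vertex group $A$ is vacuous, whereas your use of the properly discontinuous $\Out(F_n)$--action on outer space disposes of this case cleanly. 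One small imprecision: not every Baumslag--Solitar group is amenable, but since your edge group $C$ is cyclic the degenerate one-edge case yields $BS(1,n)$, which is solvable, so the conclusion stands.
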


\begin{proof}
We prove the moreover statement since it is clearly stronger.
Let $\Delta \le \Gamma$ be an infinite subgroup.
Suppose towards a contradiction that $G_\Delta$ has a minimal, nontrivial action on a simplicial tree $T$ with cyclic (or trivial) edge stabilizers. Since $H$ is normal in $G_\Delta$, the action $H \curvearrowright T$ is also minimal. Hence the main theorem of \cite{bestvina1991bounding}, implies that $T/H$ is a finite graph. Notice that $\Delta$ acts on the corresponding graph of groups decomposition of $H$ (via $\Phi \colon \Gamma \to \Out(H)$). First, this decomposition must have trivial edge groups: an infinite cyclic edge stabilizer would be fixed under some infinite order $g\in \Delta \le \Gamma$, contradicting that $G$ is hyperbolic. 
Hence, the nontrivial graph of groups $T/H$ has trivial edge stabilizers, but this implies that $\Delta$ virtually fixes this splitting of $H$. From this we obtain an infinite order element $g\in \Delta \le \Gamma$ which fixes a vertex group $A$ of the splitting. Since $A$ is finitely generated and has infinite index in $H$, we have a contradiction to \Cref{lem:big_lem}.\ref{lem:no_fix}. This completes the proof.
\end{proof}

Let us establish notation and specify the constants for the proof \Cref{th:intro1}. Let $\bar{S}\subset \Gamma$ be the image of our fixed generating set $S\subset G$. We assume that $\ell_\Gamma(\cdot)$ is conjugacy length in $\Gamma$ with respect to $\bar{S}$. For the given $r$, let $K$ be the maximum of the constants $K(1),\dotsc, K(\rank(H)+r)$ provided by \Cref{thm:KapWeid}. Set $D_0 = K$ and use \Cref{lem:combine} recursively to define $D_{n+1} = \max\{D_n,B(K,D_n)\}$ for each $n\in \N$. Set $N = 1+ D_{2\rank(H)}$ and suppose that $\Delta\le\Gamma$ is any subgroup with $\rank(\Delta)\le r$ and $\ell_\Gamma(\delta) \ge N$ for all $\delta\in \Delta\setminus\{1\}$. Let $G_\Delta$ be the preimage of $\Delta$ under the projection $p \colon G\to \Gamma$. We make the following observations:
\begin{lemma}
\label{claim:short_conj_in_kernel}
If $Y$ is a tuple in $G$ with $Y\subset G_\Delta$ and $\conjmag{Y} < N$, then $\langle Y\rangle \le H$. 
\end{lemma}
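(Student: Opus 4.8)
The plan is to push the hypothesis down to the quotient $\Gamma$ via the projection $p$ and then invoke the standing assumption that nontrivial elements of $\Delta$ have large conjugacy length. The one elementary fact I need is that $p$ does not increase word length: since $\bar S = p(S)$ generates $\Gamma$, any word in $S$ spelling an element $g\in G$ maps under $p$ to a word in $\bar S$ spelling $p(g)$, so $\abs{p(g)}_{\bar S}\le \abs{g}_S$ for every $g\in G$. Consequently $p$ does not increase the magnitude of a tuple, nor its conjugacy magnitude.

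First I would unwind the hypothesis $\conjmag{Y}<N$: this means there is some $h\in G$ with $\magnitude{hYh\inv}<N$, i.e.\ $\abs{hgh\inv}_S<N$ for every entry $g$ of $Y$. Applying $p$ and the Lipschitz bound above, each entry $g$ of $Y$ satisfies $\ell_\Gamma\big(p(g)\big)\le \abs{p(h)\,p(g)\,p(h)\inv}_{\bar S}=\abs{p(hgh\inv)}_{\bar S}\le \abs{hgh\inv}_S<N$. Then I would use the hypothesis on $\Delta$: because $Y\subset G_\Delta = p\inv(\Delta)$, each entry $g$ of $Y$ has $p(g)\in\Delta$; but every nontrivial element of $\Delta$ has $\Gamma$--conjugacy length at least $N$ by assumption, forcing $p(g)=1$, that is $g\in H$. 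Hence $\langle Y\rangle\le H$, as desired.

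There is essentially no obstacle here; the only point requiring care is bookkeeping about which generating set each length is measured against — wordlength and conjugacy magnitude in $G$ are taken with respect to $S$, while $\ell_\Gamma$ is taken with respect to $\bar S$, and the estimate above is exactly what links the two. It is worth remarking that this particular lemma uses none of the recursive structure of $N=1+D_{2\rank(H)}$, nor the torsion-freeness of $H$ or the Scott--Swarup property: all that is used is that $N$ is the chosen threshold below which $\Delta$ contains no nontrivial element of that conjugacy length in $\Gamma$.
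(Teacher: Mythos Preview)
Your proof is correct and follows essentially the same approach as the paper: pick a conjugator witnessing $\conjmag{Y}<N$, push through $p$ using $\abs{p(\cdot)}_{\bar S}\le\abs{\cdot}_S$ to bound $\ell_\Gamma(p(y))<N$ for each entry $y$, and conclude $p(y)=1$ from the hypothesis on $\Delta$. The only differences are cosmetic (variable names) and your added remarks about which hypotheses are actually used.
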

\begin{proof}
Choose $g\in G$ so that $\magnitude{gYg\inv} < N$. Then for each $y\in Y$ we have
\[ \abs{p(g)p(y)p(g)\inv}_{\bar{S}} = \abs{p(gyg\inv)}_{\bar{S}} \le \abs{gyg\inv}_S < N\]
which shows that $\ell_\Gamma(p(y)) < N$. Since we also have $p(y)\in \Delta$ by assumption, this gives $p(y) = 1$ and hence $y\in H$ by the hypothesis on $\Delta$. Thus $\langle Y \rangle \le H$.
\end{proof}

\begin{lemma}
\label{claim:induct}
Fix $n\in \{0,\dotsc,2\rank(H)-1\}$ and suppose that $M = (Y_1,\dotsc,Y_s;T)$ is a partitioned tuple with $\langle\underlie{M}\rangle = G_\Delta$ and $\len{M}\le (\rank(H)+r)$ such that for each $i\in \{1,\dotsc s\}$ we have $\conjmag{Y_i}\le D_n$ with $\langle Y_i \rangle $ quasiconvex.
Then there is a partitioned tuple $\tilde{M} = (\tilde{Y}_1,\dotsc \tilde{Y}_{\tilde{s}}; \tilde{T})$ satisfying $\conjmag{\tilde{Y}_j}\le D_{n+1}$ for each $j\in \{1,\dotsc, \tilde{s}\}$ such that $\underlie{\tilde{M}}$ is Nielsen equivalent to $\underlie{M}$ and either 
\begin{enumerate}\renewcommand{\theenumi}{\alph{enumi}}
\item\label{option:shorten} $\len{\tilde{T}} < \len{T}$ with $\tilde{s}\le s+1$ or else
\item\label{option:combine} $\len{\tilde{T}} = \len{T}$ with $\tilde{s} < s$.
\end{enumerate}
\end{lemma}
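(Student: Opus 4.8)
The plan is to run $M$ through \Cref{thm:KapWeid} and convert whichever of its conclusions holds into the required $\tilde M$ with the aid of \Cref{lem:combine}. Before starting, I would record some bookkeeping: writing $m=\len{M}$, we have $1\le m\le\rank(H)+r$, so $K(m)\le K$ and every distance or wordlength bound appearing in \Cref{thm:KapWeid} may be taken to be at most $K$; by construction $D_n\le D_{2\rank(H)}=N-1<N$, while $D_{n+1}=\max\{D_n,B(K,D_n)\}$ dominates $D_n$, $B(K,D_n)$ and $K=D_0$; and a partitioned-tuple elementary move preserves $\len{T}$, preserves the conjugacy magnitude of each $Y$-block (conjugation does not change conjugacy magnitude), and replaces $\langle Y_i\rangle$ by a conjugate of itself. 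In particular, after any chain of such moves the blocks still have conjugacy magnitude at most $D_n$ and generate quasiconvex subgroups, which by \Cref{claim:short_conj_in_kernel} (since $D_n<N$) lie in $H$ and are hence torsion-free.

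Next I would eliminate the free-product alternative of \Cref{thm:KapWeid}. Suppose $G_\Delta=\langle Y_1\rangle\ast\dotsb\ast\langle Y_s\rangle\ast\langle T\rangle$ with $\langle T\rangle$ free on $T$. Because each $\langle Y_i\rangle$ is nontrivial, this presents $G_\Delta$ as a nontrivial free product unless $s+\len{T}\le 1$; the nontrivial case contradicts \Cref{lem:no_split} (recall that $\Delta$ is infinite). In the degenerate cases, $s=0$ would make $G_\Delta$ free of rank $\len{T}\le 1$, which is absurd since $G_\Delta\supseteq H$ and $H$ is infinite and non-abelian by \Cref{lem:big_lem}; and $s=1,\ \len{T}=0$ would make $\langle Y_1\rangle=G_\Delta$, so that \Cref{claim:short_conj_in_kernel} (applicable as $\conjmag{Y_1}\le D_n<N$) would force $G_\Delta\le H$, contradicting $G_\Delta\supsetneq H$. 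Thus $M$ is equivalent to a partitioned tuple $M'=(Y'_1,\dotsc,Y'_s;T')$ with $\len{T'}=\len{T}$ realizing one of the alternatives (1)--(3), all bounds being at most $K$.

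I would then construct $\tilde M$ from $M'$ in each case, the recurring observation being that the underlying tuple will change only by permuting entries and performing Nielsen moves within a single contiguous sub-block, so $\underlie{\tilde M}$ is Nielsen equivalent to $\underlie{M'}$ and hence to $\underlie{M}$. In alternative (1), with $i\ne j$ and $d(\hull{\langle Y'_i\rangle},\hull{\langle Y'_j\rangle})\le K$: the first bullet of \Cref{lem:combine} (whose hypotheses hold by the first paragraph, with $r=D_n$) produces a tuple Nielsen equivalent to $(Y'_i,Y'_j)$ of conjugacy magnitude at most $B(K,D_n)\le D_{n+1}$; using it to replace the two blocks $Y'_i,Y'_j$ by one block gives a valid partitioned tuple (valid since $s\ge 2$ here) with $\tilde s=s-1$ and $\len{\tilde T}=\len{T}$, i.e.\ alternative (\ref{option:combine}). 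In alternative (2), with $i$ and $t\in T'$ such that $d(\hull{\langle Y'_i\rangle},t\cdot\hull{\langle Y'_i\rangle})\le K$: the second bullet of \Cref{lem:combine} produces a tuple Nielsen equivalent to $(Y'_i,(t))$ of conjugacy magnitude at most $D_{n+1}$; deleting $t$ from $T'$ and using this tuple in place of the block $Y'_i$ gives $\tilde M$ with $\tilde s=s$ and $\len{\tilde T}=\len{T}-1$, i.e.\ alternative (\ref{option:shorten}). In alternative (3), with $t\in T'$ having a conjugate of wordlength at most $K$ (so $\conjmag{(t)}\le K\le D_{n+1}$, and $t\ne e$, as a minimal generating tuple---the case of interest---has no trivial entries): deleting $t$ from $T'$ and adjoining $(t)$ as a new block gives $\tilde M$ with $\tilde s=s+1$ and $\len{\tilde T}=\len{T}-1$, again alternative (\ref{option:shorten}). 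In every case the untouched blocks still have conjugacy magnitude at most $D_n\le D_{n+1}$, so $\conjmag{\tilde Y_j}\le D_{n+1}$ for all $j$, which would finish the proof.

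I expect the only real difficulty to be this last paragraph's bookkeeping: one must confirm that each modification of $M'$ is a legitimate partitioned tuple in the sense of \Cref{def:partitioned} (no block collapses to $\{e\}$, and the merge in (1) keeps the list nonempty---this is precisely why $s\ge 2$ there is needed), that permuting blocks together with sub-block-confined Nielsen moves truly witnesses the asserted Nielsen equivalences, and that the torsion-freeness and quasiconvexity needed for \Cref{lem:combine} survive the passage from $M$ to $M'$---this last being exactly where conjugation-invariance of quasiconvexity and \Cref{claim:short_conj_in_kernel} do their work. The one substantive black box, \Cref{lem:combine}, is already in hand, so apart from these verifications the argument is a mechanical case split.
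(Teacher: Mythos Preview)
Your proposal is correct and follows essentially the same route as the paper: apply \Cref{thm:KapWeid}, rule out the free-product alternative via \Cref{lem:no_split}, then handle the three remaining cases using \Cref{lem:combine} exactly as you describe. You are in fact a bit more careful than the paper in treating the degenerate free-product cases ($s+\len{T}\le 1$) and in spelling out why quasiconvexity, torsion-freeness, and the conjugacy-magnitude bounds persist after passing to $M'$; the paper simply asserts these.
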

\begin{proof}
Since $\len{M} \le \rank(H)+r$ and $\langle\underlie{M}\rangle =  G_\Delta$ does not split as a nontrivial free product (\Cref{lem:no_split}), we may apply \Cref{thm:KapWeid} to obtain a partitioned tuple $M' =(Y'_1,\dotsc, Y'_s; T')$ that is equivalent to $M$ and satisfies one of the three conclusions of that theorem. Since all elementary moves on a partitioned tuple $(W_1,\dotsc, W_p;V)$ preserve the conjugacy class of each tuple $W_i$, we have $\conjmag{Y'_i} \le D_n$ with $\langle Y'_i\rangle$ quasiconvex for each $i$. As $D_n < N$, \Cref{claim:short_conj_in_kernel} gives $\langle Y'_i\rangle \le H$ and so ensures that $\langle Y'_i\rangle$ is torsion-free.

We now analyze the conclusions of \Cref{thm:KapWeid}: If $M'$ satisfies conclusion (1), then after reordering we may assume $d(\hull{\langle Y'_1\rangle},\hull{\langle Y'_2\rangle})\le K$ and use \Cref{lem:combine} to find a tuple $Y$ Nielsen equivalent to ($Y'_1,Y'_2)$ with $\conjmag{Y}\le D_{n+1}$. The partitioned tuple $(Y,Y'_3,\dotsc, Y'_{s};T')$ then satisfies the claim. If $M$ satisfies (2), then after reordering we have $d(\hull{\langle Y'_1\rangle},t\cdot\hull{\langle Y'_1\rangle}) \le K$ for some $t\in T'$ and so may use \Cref{lem:combine} to find a tuple $Z$ equivalent to $(Y'_1,(t))$ with $\conjmag{Z}\le D_{n+1}$. Here we take $\tilde{M} = (Z, Y'_2,\dotsc, Y'_s; T'\setminus\{t\})$ to complete the claim. If $M'$ satisfies (3), then $T'$ contains an element $t$ with $\conjmag{(t)}\le K\le D_{n+1}$ and the partitioned tuple $(Y'_1,\dotsc, Y'_s, (t); T'\setminus\{t\})$ satisfies the claim.
\end{proof}

The pieces are now in place to prove our main theorem:
\begin{proof}[Proof of \Cref{th:intro1}]
Let $L$ be any minimal-length tuple with $\langle L \rangle =G_\Delta$. Since $G_\Delta$ has a standard generating set of size  $\rank(H) + \rank(\Delta)$, we have $\len{L}\le \rank(H)+r$. Set $M_0 = (;L)$ and observe that $M_0$ satisfies \Cref{claim:induct} with $n=0$. We may therefore inductively apply \Cref{claim:induct} (with $n=0, 1,\dotsc$) to obtain a sequence $M_0,M_1,\dotsc$ of partitioned tuples each with $\underlie{M_i}$ Nielsen equivalent to $L$. 
After inducting as many times as possible, we obtain a partitioned tuple $M_k = (Y_1,\dotsc, Y_s; T)$ that satisfies $\conjmag{Y_i}\le D_k$ for each $i$ (by construction) but violates the hypotheses of  \Cref{claim:induct}, either because $k = 2\rank(H)$ or because some $\langle Y_i\rangle$ fails to be quasiconvex.
Since $\conjmag{Y_i}\le D_k < N$, \Cref{claim:short_conj_in_kernel} ensures that $\langle Y_i\rangle \le H$ for each $i$. Since $G_\Delta = \langle \underlie{M_k}\rangle$ surjects onto $\Delta$, it follows that $\len{T}\ge \rank(\Delta)$. Thus at most $\len{L}-\rank(\Delta)$ applications of \Cref{claim:induct} could have reduced the length of $T$ (option \ref{option:shorten}) and so at least $k-\len{L}+\rank(\Delta)$ applications must have combined $Y_i$'s (option \ref{option:combine}). It now follows that $k < 2\rank(H)$, for otherwise $k$ applications of the claim would necessarily produce a tuple $Y_i$ with $\len{Y_i} > \rank(H)$, contradicting $\len{Y_i} + \len{T} \le \rank(H)+\rank(\Delta)$. 

Since $M_k$ violates \Cref{claim:induct} but $k < 2\rank(H)$, it must be that some $\langle Y_i\rangle$ fails to be quasiconvex. After reordering, let us assume $\langle Y_1\rangle \le H$ is not quasiconvex. Note that we also cannot have $\rank(\langle Y_i\rangle) > \rank(H)$, for otherwise $\len{Y_i}+\len{T} > \rank(H)+\rank(\Delta)$ contradicting our choice of $L$. The only possibility afforded by \Cref{lem:big_lem}.\ref{cor:qc_or_big} is therefore $\langle Y_1 \rangle =  H$ with $\len{Y_1} = \rank(H)$. Since $\len{M_k} \le \rank(H)+\rank(\Delta)$, it follows that $M_k$ is of the form $M_k=(Y_1;T)$ with $\len{Y_1} = \rank(H)$ and  $\len{T} = \rank(\Delta)$. Therefore $M_k$ is a standard generating set for $G_\Delta$ that is Nielsen equivalent to $L$.
\end{proof}

\bibliographystyle{alphanum}
\bibliography{rank_hyperbolic_extensions}

\bigskip

\noindent
\begin{minipage}{.55\linewidth}
Department of Mathematics\\
Vanderbilt University\\
1326 Stevenson Center\\
Nashville, TN 37240, USA\\
E-mail: {\tt spencer.dowdall@vanderbilt.edu}
\end{minipage}
\begin{minipage}{.45\linewidth}
Department of Mathematics\\ 
Temple University\\ 
%Wachman Hall\\
1805 North Broad Street \\
Philadelphia, PA 19122, USA\\
E-mail: {\tt samuel.taylor@temple.edu}
\end{minipage}

\end{document}